\newcommand{\CM}{Cohen-Macaulay}
\newcommand{\wrt}{with respect to}
\newcommand{\I}{\mathbb{I} }
\newcommand{\n}{\mathfrak{n} }
\newcommand{\m}{\mathfrak{m} }
\newcommand{\R}{\mathcal{R} }
\newcommand{\C}{\mathcal{C} }
\newcommand{\Z}{\mathbb{Z} }
\newcommand{\rt}{\rightarrow}
\newcommand{\wh}{\widehat }
\newcommand{\Om}{\Omega}
\newcommand{\projdim}{\operatorname{projdim}}
\newcommand{\depth}{\operatorname{depth}}
\newcommand{\Tor}{\operatorname{Tor}}
\newcommand{\Ass}{\operatorname{Ass}}
\newcommand{\ann}{\operatorname{ann}}
\newcommand{\Supp}{\operatorname{\underline{Supp}}}
\newcommand{\Spec}{\operatorname{Spec}}
\newcommand{\CMS}{\operatorname{\underline{CM}}}
\newcommand{\Hom}{\operatorname{Hom}}
\newcommand{\sHom}{\operatorname{\underline{Hom}}}
\newcommand{\Ext}{\operatorname{Ext}}
\theoremstyle{plain}
\newtheorem{theorem}{Theorem}[section]
\newtheorem{lemma}[theorem]{Lemma}
\theoremstyle{definition}
\newtheorem{remark}[theorem]{Remark}
\theoremstyle{remark}
\begin{document}

\title[Hilbert polynomials]{Derived functors and Hilbert polynomials over hypersurface rings }
\author{Tony~J.~Puthenpurakal}
\date{\today}
\address{Department of Mathematics, IIT Bombay, Powai, Mumbai 400 076}

\email{tputhen@math.iitb.ac.in}
\subjclass{Primary  13D09, 13A30 ; Secondary 13H10 }
\keywords{functions of polynomial type, derived functors, hypersurface rings, stable category of MCM modules}

 \begin{abstract}
Let $(A,\m)$ be a hypersurface local ring of dimension $d \geq 1$ and let $I$ be an $\m$-primary ideal. We show that there is a non-negative integer $r_I$ (depending only on $I$) such that if $M$ is any non-free maximal \CM \ (=  MCM)
$A$-module the  function $n \rt \ell(\Tor^A_1(M, A/I^{n+1}))$ (which is of polynomial type) has degree $r_I$. Analogous results hold for Hilbert polynomials associated to Ext-functors.
Surprisingly a  key ingredient is the classification of thick subcategories of the  stable category of MCM $A$-modules (obtained by Takahashi, see \cite[6.6]{T}).
\end{abstract}
 \maketitle
\section{introduction}
Let $(A,\m)$ be a \CM \ local ring  of dimension $d \geq 1$ and let $I$ be an $\m$-primary ideal. If $N$ is an $A$-module of finite length then $\ell(N)$ denotes its length. Let $M$ be a maximal \CM \ (= MCM) $A$-module. The function $t_I(M, n) = \ell(\Tor^A_1(M, A/I^{n+1}))$ is of polynomial type, see \cite[Corollary 4]{Theo} (also see \cite[Proposition 17]{P1}). Let $t_I^M(z) \in  \mathbb{Q}[z]$ be such that $t_I^M(n) = t_I(M, n)$ for all $n \gg 0$. It is easily shown that $\deg t_I^M(z) \leq d - 1$. In \cite[Theorem 18]{P1} we proved that
$\deg t_\m^M(z)  = d-1$ for any non-free MCM $A$-module. It was also shown that if $I$ is a parameter ideal then $t_I(M, n) = 0$ for all $n \geq 0$, see \cite[Remark 20]{P1}.
In general it is a difficult question to determine the degree of $t_I^M(z)$ and the answer is known only for a few classes of ideals and modules, see \cite[3.5]{KT} for  some examples.
The fact that $\deg t_\m^M(z)  = d-1$  for non-free MCM's has an important consequence in the study of associated graded modules (\wrt \ $\m$) of MCM $A$-modules, see \cite{P3}.

In this paper we prove  few surprising  results.  Recall $A$ is said to be a hypersurface ring if its completion $\wh{A} = Q/(f)$ where $(Q, \n)$ is a regular local ring and $f \in \n^2$ is non-zero. We set degree of the zero polynomial to be $-1$. We show
\begin{theorem}\label{main}
Let $(A,\m)$ be a hypersurface local ring of dimension $d \geq 1$ and let $I$ be an $\m$-primary ideal. Then there is an  integer $r_I \geq -1$ (depending only on $I$) such that if $M$ is any non-free maximal MCM
$A$-module then $\deg t_I^M(z) = r_I$.
\end{theorem}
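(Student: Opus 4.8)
The plan is to recognize the degree of $t_I^M$ as a numerical invariant defined on the stable category $\CMS(A)$ of MCM modules, to show that bounding this invariant cuts out thick subcategories, and then to use Takahashi's classification to force those subcategories to be trivial, so that the invariant cannot vary. First I would pass to the completion, which changes neither lengths nor $\Tor$, so that $\wh{A}=Q/(f)$ and the theory of matrix factorizations is available. For an MCM module $M$ the minimal free resolution is eventually $2$-periodic, so $\Tor^A_i(M,A/I^{n+1})$ is periodic of period $2$ for $i\geq 1$ and coincides there with Tate homology $\wh{\Tor}^A_i(M,A/I^{n+1})$; since $I$ is $\m$-primary these modules have finite length, and since free summands contribute nothing in degrees $\geq 1$, the function $t_I^M$ depends only on the class of $M$ in $\CMS(A)$. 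As $\wh{\Tor}$ satisfies $\wh{\Tor}_i(\Omega M,-)\cong\wh{\Tor}_{i+1}(M,-)$ and sends triangles to long exact sequences, I would package the degree into the $\Omega$-invariant quantity
\[
  D_I(M)\ :=\ \max\bigl\{\deg t_I^M,\ \deg t_I^{\Omega M}\bigr\},
\]
where $t_I^{\Omega M}(n)=\ell(\Tor^A_2(M,A/I^{n+1}))$ for $n\gg0$; periodicity interchanges the two entries under $\Omega$.

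Next I would verify that for each integer $r$ the full subcategory $\mathcal{T}_r=\{M\in\CMS(A):D_I(M)\leq r\}$ is thick. Closure under $\Omega^{\pm1}$ is the $\Omega$-invariance above; closure under summands and finite sums follows from $D_I(M\oplus M')=\max\{D_I(M),D_I(M')\}$; and the two-out-of-three property follows from the long exact sequence of $\wh{\Tor}$ attached to a triangle $X\to Y\to Z$, since in a length-additive long exact sequence the degree of any one Hilbert function is at most the maximum of the degrees of its neighbours, whence $D_I(Y)\leq\max\{D_I(X),D_I(Z)\}$ and its cyclic permutations.

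I would then invoke Takahashi \cite[6.6]{T}: the thick subcategories of $\CMS(A)$ are in inclusion-preserving bijection with the specialization-closed subsets of $\operatorname{Sing}A$, a thick subcategory corresponding to the union of the nonfree loci $\Supp$ of its objects. Two bounds now come for free. Every non-free $M$ has $\m\in\Supp M$, so $\{\m\}\subseteq\Supp M$; since $\Supp(\Omega^d k)=\{\m\}$ (where $k=A/\m$), the module $\Omega^d k$ lies in the thick subcategory generated by $M$, and building it from shifts of $M$ by cones and retracts gives $D_I(\Omega^d k)\leq D_I(M)$. Dually, choosing $G$ a finite direct sum of $d$-th syzygies of the $A/\mathfrak{p}$ for $\mathfrak{p}$ running over the minimal primes of $\operatorname{Sing}A$ yields $\Supp G=\operatorname{Sing}A$, hence $\operatorname{thick}(G)=\CMS(A)$, so every MCM $M$ lies in $\operatorname{thick}(G)$ and $D_I(M)\leq D_I(G)$. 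Thus
\[
  D_I(\Omega^d k)\ \le\ D_I(M)\ \le\ D_I(G)
  \qquad\text{for every non-free MCM module } M ,
\]
and since $\Omega^d k\in\operatorname{thick}(G)$ already gives $D_I(\Omega^d k)\leq D_I(G)$, the whole theorem collapses to the single reverse inequality $D_I(G)\leq D_I(\Omega^d k)$.

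This remaining inequality is the crux and the main obstacle: it says that enlarging the support of a module cannot raise the degree of the $\m$-primary $\Tor$-Hilbert function, so that the module with the smallest support already realizes the extremal degree. To prove it I would filter $G$ by cyclic modules $A/\mathfrak{p}$ and bound $\deg t_I^{A/\mathfrak{p}}$ by induction on $\dim A/\mathfrak{p}$, cutting down by a superficial (general) element to reduce to the case $\mathfrak{p}=\m$, i.e.\ to $k$; the $\m$-primarity of $I$ is exactly what keeps every intervening $\Tor$ of finite length and confined to $\{\m\}$. The delicate point is tracking the leading term of the Hilbert function through these reductions, and this is where the associated-graded and Rees-module techniques of \cite{P1} must be fed into the categorical skeleton above. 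A final short step, using the self-duality of $\CMS(A)$ interchanging $M$ and $\Omega M$, identifies $\deg t_I^M$ with $\deg t_I^{\Omega M}$, so that $D_I(M)=\deg t_I^M$ and the constancy of $D_I$ upgrades to $\deg t_I^M=r_I:=D_I(\Omega^d k)$ for every non-free MCM module $M$.
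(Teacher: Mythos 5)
Your categorical skeleton is sound and is close in spirit to the paper's: the paper also turns $\deg t_I^{(-)}$ into an invariant on the stable category whose sublevel sets are thick and then applies Takahashi's classification. But your reduction leaves the actual mathematical content unproved. After establishing $D_I(\Om^d k)\leq D_I(M)\leq D_I(G)$ you concede that everything rests on the single inequality $D_I(G)\leq D_I(\Om^d k)$, and the sketch you offer for it does not work as stated: filtering $G$ by cyclic modules $A/\mathfrak{p}$ and cutting down by a superficial element replaces $(A,I)$ by $(A/(x),I/(x))$, which is no longer a hypersurface (the paper must pass to complete intersections and $2$-periodic modules to make such reductions in its Section 5, and even there only to compare $\Tor$-degrees with $\Ext$-degrees for a \emph{fixed} module, not to compare different modules); moreover $t_I$ is neither monotone along filtrations nor related across that reduction in the direction you need. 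Since $\Om^d k$ is exactly the module with smallest non-free locus and $G$ the one with largest, the inequality $D_I(G)\leq D_I(\Om^d k)$ is essentially the theorem itself, so as written the proposal is circular at its core.

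The paper's route around this is genuinely different from what you sketch and supplies the missing ingredient. It first proves constancy of the degree on $\CMS_0(A)$ (modules free on the punctured spectrum), using only that $\CMS_0(A)$ has \emph{no} proper thick subcategories: with $r$ the maximal degree, the normalized leading coefficient $\xi_I(M)=\lim_{n\rt\infty} (r!/n^r)\, t_I(M,n)$ is a nonnegative, additive, $\Om$-invariant, subadditive-on-triangles function whose kernel is thick and proper, hence zero. It then extends to arbitrary non-free $M$ by induction on $\dim\Supp(M)$: choosing $x$ in $\m^l$ annihilating every $L_i(M)=\bigoplus_n\Tor^A_i(M,A/I^{n+1})$ (finite generation over the Rees algebra supplies such an $l$) and avoiding the minimal primes of $\ann\sHom(M,M)$, the cone $N$ of $M\xrightarrow{\ x\ }M$ satisfies the \emph{exact} identity $t_I^N(z)=2\,t_I^M(z)$ together with $\dim\Supp(N)<\dim\Supp(M)$. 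That identity, not an inequality, is what transports the degree computed on $\CMS_0(A)$ up to modules with large non-free locus; your argument has no substitute for it, so the proposal establishes only the (correct and useful) formal framework, not the theorem.
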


\s For the Ext functors we prove an analogous result. It is known that if $M$ is a finitely generated $A$-module the function $n \rt \ell(\Ext_A^1(M, A/I^{n+1}))$ is of polynomial type say of degree $s_I^M$, see \cite[Corollary 4]{Theo}.  We prove
\begin{theorem}\label{main-e1}
Let $(A,\m)$ be a hypersurface local ring of dimension $d \geq 1$ and let $I$ be an $\m$-primary ideal. Then there is an  integer $s_I \geq -1$ (depending only on $I$) such that if $M$ is any non-free maximal MCM
$A$-module then $s_I^M = s_I$.
\end{theorem}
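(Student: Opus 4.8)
The plan is to deduce Theorem \ref{main-e1} from Theorem \ref{main} by means of the $A$-dual functor on maximal MCM modules. Write $M^* = \Hom_A(M, A)$. Since $A$ is Gorenstein and $M$ is MCM, $M^*$ is again MCM, $\Ext_A^i(M, A) = 0$ for $i > 0$, and the biduality map $M \rt M^{**}$ is an isomorphism. The crux of the argument is the identity
\begin{equation}\label{duality}
\Ext^1_A(M, N) \cong \Tor^A_1(M^*, N),
\end{equation}
valid for every MCM module $M$ and every $A$-module $N$. Granting \eqref{duality} with $N = A/I^{n+1}$, the function $n \rt \ell(\Ext^1_A(M, A/I^{n+1}))$ coincides with $t_I(M^*, n)$, so that $s_I^M = \deg t_I^{M^*}$, and Theorem \ref{main-e1} becomes a restatement of Theorem \ref{main}.

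To prove \eqref{duality} I would pass to the completion, where $\wh{A} = Q/(f)$ and $M$ acquires a matrix factorization; we may discard free summands, since these contribute nothing to either side. If $(\phi, \psi)$ is a reduced matrix factorization of $f$ with $M = \operatorname{coker}\phi$, then the minimal free resolution of $M$ is the $2$-periodic complex alternating $\phi$ and $\psi$, while $M^*$ has matrix factorization $(\psi^T, \phi^T)$. Computing $\Ext^1_A(M, N)$ from $\Hom(-, N)$ applied to the resolution of $M$, and $\Tor^A_1(M^*, N)$ from the resolution of $M^*$ tensored with $N$, one finds that both groups are identified with $\ker(\psi^T \otimes N)/\operatorname{image}(\phi^T \otimes N)$, which gives \eqref{duality} over $\wh{A}$. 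Since $A/I^{n+1}$ has finite length it is unchanged by completion, and flat base change identifies the relevant lengths over $A$ with those over $\wh{A}$; hence $\ell(\Ext^1_A(M, A/I^{n+1})) = \ell(\Tor^A_1(M^*, A/I^{n+1}))$ for all $n$, and in particular $s_I^M = \deg t_I^{M^*}$.

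It remains to observe that $M \rt M^*$ restricts to a bijection on isomorphism classes of non-free MCM $A$-modules: $M^*$ is non-free precisely when $M$ is (otherwise $M \cong M^{**}$ would be free), and the operation is its own inverse up to the biduality isomorphism. Consequently, as $M$ ranges over all non-free MCM modules so does $M^*$, and Theorem \ref{main} supplies a single integer $r_I$ with $\deg t_I^{M^*} = r_I$ for every such $M^*$. Combining this with the previous paragraph gives $s_I^M = \deg t_I^{M^*} = r_I$ for all non-free MCM $M$, so that $s_I := r_I$ proves the theorem; in fact the argument shows $s_I = r_I$.

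The only delicate point is the duality \eqref{duality}; everything else is formal. I expect the main obstacle to be purely a matter of bookkeeping: verifying that the transposed matrix factorization $(\psi^T, \phi^T)$ indeed computes $M^*$, and that the differentials match after \emph{dualizing} the resolution of $M$ versus \emph{tensoring} the resolution of $M^*$, together with the routine check that passage to $\wh{A}$ preserves the pertinent lengths. Once \eqref{duality} is established, Theorem \ref{main-e1} carries no further content beyond Theorem \ref{main}. A self-contained alternative would be to rerun the thick-subcategory argument underlying Theorem \ref{main} directly for the functors $n \rt \ell(\Ext^1_A(-, A/I^{n+1}))$, but the duality reduction is considerably shorter.
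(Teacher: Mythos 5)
Your proposal is correct in substance but takes a genuinely different route from the paper. The paper reruns its Tor-argument verbatim for Ext: it first treats modules free on the punctured spectrum by showing that $M \mapsto \lim_{n} r!\, s_I(M,n)/n^{r}$ is a weak triangle function on $\CMS_0(A)$ and invoking Takahashi's theorem that $\CMS_0(A)$ has no proper thick subcategories, and then extends to arbitrary non-free MCM modules by induction on $\dim \Supp(M)$, using that $\bigoplus_{n} \Ext^i_A(M, A/I^{n+1})$ is a finitely generated module over the Rees algebra. You instead reduce the whole statement to Theorem \ref{main} via MCM duality. One indexing caveat: with the usual conventions $M^{*} \cong \operatorname{coker}(\phi^{T})$ has matrix factorization $(\phi^{T}, \psi^{T})$, so the group $\ker(\psi^{T} \otimes N)/\image(\phi^{T} \otimes N)$ you exhibit is $\Tor^A_2(M^{*}, N) \cong \Tor^A_1(\Om^1_A(M^{*}), N)$ rather than $\Tor^A_1(M^{*}, N)$; your claimed isomorphism $\Ext^1_A(M,N) \cong \Tor^A_1(M^{*}, N)$ is off by one syzygy and can fail for general $N$. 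This is harmless for your purpose: $\Om^1_A(M^{*})$ is again a non-free MCM module, and by Lemma \ref{basic}(2) the functions $t_I(M^{*}, n)$ and $t_I(\Om^1_A(M^{*}), n)$ coincide anyway, so $s_I^M = \deg t_I^{M^{*}} = r_I$ still follows from Theorem \ref{main}. The payoff of your route is real: it is much shorter, and it delivers Theorem \ref{te} (the equality $r_I = s_I$) as an immediate by-product, whereas the paper proves that equality separately by an induction on dimension using superficial elements and the Avramov--Buchweitz vanishing theorem. What the paper's longer argument buys is a template that also handles the functor $\Ext^{d+1}_A(A/I^{n+1}, -)$ of Theorem \ref{main-e2}, where no comparably direct reduction to Tor of a dual module is available.
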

It is also known that if $M$ is a finitely generated $A$-module the function $n \rt \ell(\Ext_A^{d+1}(A/I^{n+1}, M))$ is of polynomial type say of degree $e_I^M$, see \cite[Theorem 5]{Theo}.  Let $\Spec^0(A) = \Spec(A) \setminus \{ \m \}$. We prove
\begin{theorem}\label{main-e2}
Let $(A,\m)$ be a hypersurface local ring of dimension $d \geq 1$ and let $I$ be an $\m$-primary ideal. Then there is an  integer $e_I \geq -1$ (depending only on $I$) such that if $M$ is any non-free maximal MCM
$A$-module  free on $\Spec^0(A)$ then $e_I^M = e_I$.
\end{theorem}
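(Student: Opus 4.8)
The plan is to transport the whole question into the stable category $\CMS(A)$ of MCM modules and to show that the degree function $M \mapsto e_I^M$ has sub-level sets that are thick subcategories; Takahashi's classification then forces the degree to be constant on the class of modules with the smallest possible stable support, which is exactly the class of non-free MCM modules that are free on $\Spec^0(A)$. Since length, $\Ext$ and ``polynomial type'' are insensitive to completion, and the condition of being free on the punctured spectrum is governed by an $\m$-primary (non-free locus) ideal that survives completion, I would first reduce to the case where $A = Q/(f)$ is a complete hypersurface. The first genuine step is to reinterpret the invariant. Using dimension shifting together with the fact that the $d$-th syzygy of the finite length module $A/I^{n+1}$ is MCM, I would establish, for every MCM $M$,
\[
\Ext^{d+1}_A(A/I^{n+1}, M) \cong \Ext^1_A(\Om^d(A/I^{n+1}), M) \cong \sHom_A(L_n, M), \qquad L_n := \Om^{d+1}(A/I^{n+1}),
\]
where $\sHom_A$ denotes stable Hom and $L_n$ is MCM. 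In particular $e_I^M = \deg_n \ell(\sHom_A(L_n, M))$, so $e_I^M$ depends only on the class of $M$ in $\CMS(A)$ (free summands contribute nothing, since $\sHom_A(L_n, A) = 0$, equivalently $\Ext^{d+1}_A(A/I^{n+1}, A) = 0$ as $A$ is Gorenstein of injective dimension $d$). This rewriting is what makes the triangulated machinery available: for each fixed $n$, the functor $\sHom_A(L_n, -)$ is cohomological on $\CMS(A)$.

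Next I would fix $j \ge -1$ and show that
\[
\C_j := \{\, M \in \CMS(A) : e_I^M \le j \,\}
\]
is a thick subcategory. Closure under finite direct sums and summands is immediate, since $\ell(\sHom_A(L_n, M_1 \oplus M_2))$ is the sum of the two lengths, whence $e_I^{M_1 \oplus M_2} = \max(e_I^{M_1}, e_I^{M_2})$. For the two-out-of-three property I would feed a triangle $M_1 \to M_2 \to M_3 \to \Om^{-1}M_1$ into the cohomological functor $\sHom_A(L_n, -)$; using that $\Om^2 \cong \mathrm{id}$ on $\CMS(A)$ makes the resulting long exact sequence $2$-periodic, so that the lengths of the $\sHom_A(L_n, M_i)$ are linked subadditively in a cyclic fashion. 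Taking degrees in $n$ then shows that whenever two of $e_I^{M_1}, e_I^{M_2}, e_I^{M_3}$ are $\le j$, so is the third. Closure under $\Om^{\pm 1}$ follows from the same periodicity, so each $\C_j$ is thick.

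Finally I would invoke Takahashi's classification (\cite[6.6]{T}): thick subcategories of $\CMS(A)$ are in bijection, via $M \mapsto \Supp(M)$, with the specialization-closed subsets of the singular locus of $A$. A non-free MCM module $M$ that is free on $\Spec^0(A)$ has $\Supp(M) = \{\m\}$ (nonempty because $M$ is non-free, contained in $\{\m\}$ because $M$ is free on the punctured spectrum), and $\{\m\}$ is the minimal nonempty specialization-closed subset. Consequently every such $M$ generates the same thick subcategory $\mathcal{T}_{\{\m\}} = \{\, N : \Supp(N) \subseteq \{\m\}\,\}$, so for any two such modules $M, N$ one has $M \in \mathrm{thick}(N)$ and $N \in \mathrm{thick}(M)$. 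Since $\C_{e_I^N}$ is thick and contains $N$, it contains $\mathrm{thick}(N) \ni M$, giving $e_I^M \le e_I^N$; by symmetry $e_I^M = e_I^N$. Declaring $e_I$ to be this common value (the class is nonempty, e.g. $\Om^d(A/\m)$ lies in it) completes the proof.

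I expect the crux to be the thickness of $\C_j$ --- specifically the two-out-of-three step, where one must control the degree in $n$ of the finite length modules $\sHom_A(L_n, M_i)$ along the $2$-periodic long exact sequence while the object $L_n$ itself varies with $n$; the stable-Hom reinterpretation and the period-two phenomenon over a hypersurface are exactly what keep this uniform. A secondary point to verify with care is that the reduction to the complete case preserves the hypothesis of being free on $\Spec^0(A)$.
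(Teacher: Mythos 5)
Your proposal is correct and follows essentially the same route as the paper: the paper likewise treats $e_I(-,n)$ as a stable, $\Om$-invariant, triangle-subadditive invariant (Lemma 2.4) and then invokes Takahashi's theorem that $\CMS_0(A)$ has no proper nonzero thick subcategories. The only difference is packaging --- you use the sub-level sets $\{M : e_I^M \le j\}$ and a stable-Hom rewriting, while the paper uses the normalized leading coefficient $\xi_I$ at the maximal occurring degree and shows $\ker \xi_I = 0$; these are the same argument.
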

See \ref{why} on why in Theorem \ref{main-e2} we need to restrict to the case of MCM modules free on $\Spec^0(A)$ while in Theorems  \ref{main} and \ref{main-e1} we do not have such restriction.

Finally we show
\begin{theorem}\label{te}
(with hypotheses as in \ref{main} and \ref{main-e1}) We have $r_I = s_I$.
\end{theorem}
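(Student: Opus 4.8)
The plan is to exhibit, for every non-free MCM module $M$, a natural identification of the Tor Hilbert function $t_I^M$ with the Ext Hilbert function $s_I^{M^*}$ of the $A$-dual $M^* = \Hom_A(M,A)$, and then to invoke Theorems \ref{main} and \ref{main-e1}. Since $A$ is a hypersurface, and hence Gorenstein, I may first pass to the completion $\wh A = Q/(f)$: lengths are unchanged under completion and, as completion is flat, $\Tor$ and $\Ext$ commute with it, so $t_I^M$ and $s_I^M$ are computed equally over $A$ or over $\wh A$. Over $\wh A$ every MCM module $M$ is given by a matrix factorization $(\phi,\psi)$ of $f$, meaning $\phi\psi = \psi\phi = f\cdot\mathrm{id}_{F}$ for some free module $F$, with $M = \operatorname{coker}\phi$ and minimal free resolution the $2$-periodic complex $\cdots \rt F \xrightarrow{\phi} F \xrightarrow{\psi} F \rt M \rt 0$.

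The heart of the argument is the isomorphism
\[ \Tor_1^A(M,N) \cong \Ext_A^1(M^*,N), \]
valid for every finitely generated $N$. To see it, note that $M^* = \operatorname{coker}(\psi^{T})$ is MCM with matrix factorization $(\psi^{T},\phi^{T})$, since $\psi^{T}\phi^{T} = (\phi\psi)^{T} = f\cdot\mathrm{id}$ and likewise $\phi^{T}\psi^{T} = f\cdot\mathrm{id}$; its minimal resolution is $\cdots \rt F^* \xrightarrow{\psi^{T}} F^* \xrightarrow{\phi^{T}} F^* \rt M^* \rt 0$. Tensoring the resolution of $M$ with $N$ gives $\Tor_1^A(M,N) = \ker(\psi\otimes N)/\image(\phi\otimes N)$, while applying $\Hom_A(-,N)$ to the resolution of $M^*$ and using the natural isomorphism $\Hom_A(F^*,N)\cong F\otimes_A N$ turns the dualized differentials $\phi^{T},\psi^{T}$ back into $\phi\otimes N,\psi\otimes N$, so that $\Ext_A^1(M^*,N) = \ker(\psi\otimes N)/\image(\phi\otimes N)$ as well. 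Comparing the two presentations yields the displayed isomorphism.

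Taking $N = A/I^{n+1}$ gives $t_I(M,n) = \ell(\Tor_1^A(M,A/I^{n+1})) = \ell(\Ext_A^1(M^*,A/I^{n+1}))$ for all $n$, hence $t_I^M(z) = s_{I}^{M^*}(z)$ and in particular $\deg t_I^M = s_I^{M^*}$. Because $A$ is Gorenstein, $M \mapsto M^*$ is a duality on MCM modules with $M^{**}\cong M$; thus $M^*$ is again MCM, and it is non-free precisely when $M$ is, for if $M^*$ were free then $M\cong M^{**}$ would be free. Therefore, for any non-free MCM module $M$, Theorem \ref{main} gives $\deg t_I^M = r_I$, while Theorem \ref{main-e1} applied to the non-free MCM module $M^*$ gives $s_I^{M^*} = s_I$; combining these with the equality above yields $r_I = s_I$. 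The main point to handle with care is the index bookkeeping in the matrix-factorization comparison---ensuring the dual resolution is aligned with the correct period so that degree $1$ on the Ext side matches degree $1$ on the Tor side---together with the verification that the length identity is unaffected by the passage to $\wh A$.
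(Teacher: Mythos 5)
Your proposal is correct in substance but follows a genuinely different route from the paper. The paper deduces Theorem \ref{te} from Theorem \ref{ci}, which it proves for an arbitrary complete intersection and any MCM module with bounded Betti numbers by induction on $\dim A$: one passes to an $M\oplus A$-superficial element $x$ that is also filter-regular on $L_1(M)$ and $E^i(M)$, uses the exact sequences \ref{t} and \ref{e} to drop both degrees by one, and settles the base case $d=1$ via the Avramov--Buchweitz theorem (\cite[Theorem III]{AB}), which converts eventual vanishing of all $\Tor_i(M,A/I^{n+1})$ into eventual vanishing of the corresponding $\Ext$'s. You instead exploit the hypersurface structure directly: the complex $\Hom_A(F_\bullet,N)$ obtained from the $2$-periodic resolution of $M^*$ is, after the identification $\Hom_A(F^*,N)\cong F\otimes_A N$, the same $2$-periodic complex that computes $\Tor_\bullet^A(M,N)$ read in the opposite direction. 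One caveat on your ``index bookkeeping'': with the standard convention $M=\operatorname{coker}\phi$, $M^*=\operatorname{coker}\phi^T$, the homology you compute for $\Ext^1_A(M^*,N)$ sits at the \emph{other} spot of the periodic complex, so the clean statement is $\Ext^1_A(M^*,N)\cong\Tor_2^A(M,N)$ rather than $\Tor_1^A(M,N)$. This off-by-one is harmless for the theorem: $\Tor_2^A(M,A/I^{n+1})\cong\Tor_1^A(\Omega^1_A(M),A/I^{n+1})$ and Lemma \ref{basic}(2) gives $t_I(\Omega^1_A(M),n)=t_I(M,n)$, so one still gets $t_I(M,n)=s_I(M^*,n)$ for all $n$, and then $r_I=\deg t_I^M=\deg s_I^{M^*}=s_I$ since $M^*$ is non-free MCM exactly when $M$ is. Your argument buys a stronger conclusion --- the Hilbert polynomials $t_I^M(z)$ and $s_I^{M^*}(z)$ coincide identically, not merely in degree --- and avoids superficial elements, the reduction modulo $x$, and the appeal to \cite{AB}; its cost is that it is strictly a hypersurface argument, whereas the paper's Theorem \ref{ci} establishes $\deg t_I^M=\deg s_I^M$ for the \emph{same} module $M$ over any complete intersection when $M$ has bounded Betti numbers, which is of independent interest.
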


\s Let $G_I(A) = \bigoplus_{n \geq 0}I^n/I^{n+1}$ be the associated graded ring of $I$. Let $M$ be an $A$-module. Let $G_I(M) = \bigoplus_{n \geq 0}I^nM/I^{n+1}M$ be the associated graded module of $M$ with respect to $I$, considered as a module over $G_I(A)$. We studied the function $t_I(M, n)$ as it is useful in the study of associated graded modules. The following result shows that the reverse direction is very strong at least for hypersurfaces.
\begin{theorem}
\label{ass} Let $(A,\m)$ be a hypersurface local ring of dimension $d \geq 1$ and let $I$ be an $\m$-primary ideal. Suppose there exists a non-free MCM $A$-module $M$ with $G_I(M)$ unmixed and equi-dimensional. Then $r_I = d -1$ or $r_I = -1$. Furthermore if $r_I = -1$ then $\projdim A/I^n < \infty$ for all $n \geq 1$.
\end{theorem}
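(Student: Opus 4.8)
The plan is to derive the theorem from Theorem~\ref{main} together with a filtration‑theoretic reading of $t_I$. Since $r_I=\deg t_I^{M'}$ is the same for every non‑free MCM module $M'$, I am free to replace $M$ by its syzygy $\Omega M$; over the hypersurface $A$ one has $\Omega^2 M\cong M$, so there is a short exact sequence $0\to M\to F\to \Omega M\to 0$ with $F$ free. Tensoring with $A/I^{n+1}$ and using $\Tor_1^A(F,A/I^{n+1})=0$ gives the natural identification
\[
\Tor_1^A(\Omega M,A/I^{n+1})\ \cong\ \frac{M\cap I^{n+1}F}{I^{n+1}M},
\]
so that $t_I(\Omega M,n)=\ell(\mathcal D_{n+1})$, where $\mathcal D=\bigoplus_{n\ge 0}(M\cap I^nF)/I^nM$ is the graded module measuring the discrepancy between the $I$-adic filtration on $M$ and the filtration induced from $F$. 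Thus $r_I$ is exactly the degree of the Hilbert function $n\mapsto \ell(\mathcal D_{n+1})$, which also recovers $r_I\le d-1$.

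The dichotomy then comes from a single observation. Consider the $G_I(A)$-linear comparison map $\alpha\colon G_I(M)\to G^{*}(M)$ into the associated graded module $G^{*}(M)=\bigoplus_n (M\cap I^nF)/(M\cap I^{n+1}F)$ of the induced filtration; its kernel is the graded $G_I(A)$-submodule $\ker\alpha=\bigoplus_{n}(I^nM\cap I^{n+1}F)/I^{n+1}M\subseteq G_I(M)$. First I would record the elementary fact that $\ker\alpha=0$ forces $\mathcal D=0$: the equalities $I^nM\cap I^{n+1}F=I^{n+1}M$ propagate by induction, since $M\cap I^{n+1}F\subseteq M\cap I^nF=I^nM$ gives $M\cap I^{n+1}F=I^nM\cap I^{n+1}F=I^{n+1}M$; hence the two filtrations coincide and $\mathcal D=0$, i.e.\ $r_I=-1$. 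So it suffices to treat $\ker\alpha\neq 0$. Here the hypothesis enters decisively: because $G_I(M)$ is unmixed and equidimensional, $\Ass(\ker\alpha)\subseteq \Ass(G_I(M))$ consists of primes of dimension $d$, so $\dim_{G_I(A)}\ker\alpha=d$ and $\ell(\ker\alpha_n)$ is eventually a polynomial of degree $d-1$. The degreewise inclusion $I^nM\cap I^{n+1}F\hookrightarrow M\cap I^{n+1}F$ yields $\ell(\mathcal D_{n+1})\ge \ell(\ker\alpha_n)$, forcing $r_I\ge d-1$ and hence $r_I=d-1$. This establishes $r_I\in\{d-1,-1\}$.

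For the final assertion, suppose $r_I=-1$. Since $d\ge 1$, the value $-1$ can only arise in the case $\ker\alpha=0$ above, so in fact $\mathcal D=0$; that is, $\Tor_1^A(\Omega M,A/I^{n+1})=0$ for \emph{every} $n\ge 0$, not merely for $n\gg 0$. Because $A$ is a hypersurface, $\Tor$ is rigid, so this upgrades to $\Tor_i^A(\Omega M,A/I^{n+1})=0$ for all $i\ge 1$; as $\Omega M$ is a non‑free MCM module its support variety over $\widehat{A}=Q/(f)$ is maximal, and the vanishing of all higher $\Tor$ against it forces the support variety of $A/I^{n+1}$ to be trivial, i.e.\ $\projdim_A A/I^{n+1}<\infty$. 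As $n$ is arbitrary, $\projdim_A A/I^{n}<\infty$ for all $n\ge 1$. The step I expect to require the most care is precisely this last passage from $\Tor$-vanishing to finite projective dimension: one must combine rigidity of $\Tor$ with the support‑variety description available over a hypersurface to promote $\Tor_1^A(\Omega M,A/I^{n+1})=0$ into $\projdim_A A/I^{n+1}<\infty$. The rest of the argument is pleasingly elementary once Theorem~\ref{main} and the observation $\ker\alpha=0\Rightarrow\mathcal D=0$ are in hand, with unmixedness and equidimensionality doing exactly the work of converting a nonzero discrepancy into a top‑dimensional one.
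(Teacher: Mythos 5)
Your argument reaches the correct conclusions, and your treatment of the main dichotomy is genuinely different from the paper's. The paper takes the cosyzygy sequence $0\to M\to F\to N\to 0$, identifies $\bigoplus_{n}\Tor^A_1(N,A/I^{n+1})$ (your $\mathcal{D}$) with a graded $\R$-submodule of $L_0(M)=\bigoplus_n M/I^{n+1}M$ over the Rees algebra $\R=A[It]$, and then invokes $\Ass_\R G_I(M)=\Ass_\R L_0(M)$ from \cite[5.6]{P3} to conclude that this submodule, if nonzero, has Hilbert function of degree $d-1$. You instead locate a nonzero finitely generated submodule of $G_I(M)$ itself, namely $\ker\alpha$ for the comparison map $\alpha\colon G_I(M)\to G^{*}(M)$, prove the elementary implication $\ker\alpha=0\Rightarrow\mathcal{D}=0$ by induction on $n$, and use the termwise inclusion $\ker\alpha_n\subseteq\mathcal{D}_{n+1}$ to force $r_I\ge d-1$ when $\ker\alpha\neq 0$. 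This is more self-contained: unmixedness and equidimensionality are applied to $G_I(M)$ directly, and the citation of \cite[5.6]{P3} is avoided entirely. Your final step (eventual Tor-vanishing over a hypersurface forces one factor to have finite projective dimension) is the same as the paper's, which cites \cite[1.9]{HW} where you use the Avramov--Buchweitz support-variety picture.

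One justification needs repair. You pass from $\Tor^A_1(\Omega M,A/I^{n+1})=0$ to $\Tor^A_i(\Omega M,A/I^{n+1})=0$ for all $i\ge 1$ by asserting that Tor is rigid over hypersurfaces. Unrestricted rigidity fails over hypersurfaces: for $R=k[[x,y]]/(xy)$ one has $\Tor^R_1(R/(x),R/(x))=0$ while $\Tor^R_2(R/(x),R/(x))\cong k$. The correct (and easy) replacement in your situation is $2$-periodicity together with Lemma \ref{basic}(2): for $j\ge 1$ one has $\Tor^A_{2j+1}(\Omega M,A/I^{n+1})\cong\Tor^A_1(\Omega M,A/I^{n+1})=0$ and $\Tor^A_{2j}(\Omega M,A/I^{n+1})\cong\Tor^A_1(M,A/I^{n+1})$, which vanishes because by Lemma \ref{basic}(2) its length equals $\ell(\Tor^A_1(\Omega M,A/I^{n+1}))=0$. (This length comparison is exactly where the finite length of $A/I^{n+1}$ enters; in the counterexample above the second argument has infinite length, so the lemma does not apply there.) With this substitution your proof is complete.
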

\emph{Technique used to prove the result:} We first note that the function $t_I(M,n)$ is a function on $\CMS(A)$ the stable category of MCM $A$-modules. We also note that $\CMS(A)$ is a triangulated category \cite[4.4.1]{Buchw}. Let $\CMS_0(A)$ be the thick subctegory of MCM $A$-modules which are free on the punctured spectrum $\Spec^0(A)$ of $A$. The crucial ingredient in our proofs is that $\CMS_0(A)$ has \emph{no} proper thick subcategories, see \cite[6.6]{T}. We first prove Theorem \ref{main} for non-free MCM modules in $\CMS_0(A)$ and then prove for all non-free MCM $A$-modules by using an induction on $\dim \sHom_A(M, M)$. The techniques to prove Theorems \ref{main-e1} and  \ref{main-e2} are similar.

Here is an overview of the contents of this paper. In section two we discuss a few preliminaries that we need.
In section three we prove Theorems \ref{main}, \ref{main-e1}, \ref{main-e2} when $M$ is free on the punctured spectrum of $A$. Finally in section four we prove Theorems \ref{main} and \ref{main-e1}.

\section{Preliminaries}
In this section we discuss a few preliminary results that we need.
We use \cite{N} for notation on triangulated categories. However we will assume that if $\mathcal{C}$ is a triangulated category then $\Hom_\mathcal{C}(X, Y)$ is a set for any objects $X, Y$ of $\mathcal{C}$.

\s \label{t-f} Let $\C$ be a skeletally  small triangulated category  with shift operator $\Sigma$ and let $\I(\C)$ be the set of isomorphism classes of objects in $\C$. By a \emph{weak triangle function} on $\C$ we mean a function $\xi \colon \I(\C) \rt \Z$ such that
\begin{enumerate}
  \item $\xi(X) \geq 0$ for all $X \in \C$.
  \item $\xi(0) = 0$.
  \item $\xi(X \oplus Y) = \xi(X) + \xi(Y)$ for all $X, Y \in \C$.
  \item $\xi(\Sigma X ) = \xi(X)$ for all $X \in \C$.
  \item If $X \rt Y \rt Z \rt \Sigma X $ is a triangle in $\C$ then
   $\xi(Z) \leq \xi(X) + \xi(Y)$.
\end{enumerate}
\s Set $$\ker \xi = \{ X \mid \xi(X) = 0 \}.$$
The following result (essentially an observation) is a crucial ingredient in our proof of Theorem \ref{main}.
\begin{lemma}(see \cite[2,3]{P4} )
\label{ker-lemma}(with hypotheses as above)
$\ker \xi $ is a thick subcategory of $\C$.
\end{lemma}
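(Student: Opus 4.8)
The plan is to verify directly that the full subcategory $\ker \xi$ satisfies the defining properties of a thick subcategory of $\C$: closure under isomorphisms, stability under the shift $\Sigma$ and its inverse, closure under direct summands, and the two-out-of-three property for triangles. Each property will follow from exactly one of the axioms (1)--(5) together with elementary manipulation of triangles, so no deep input is required.

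First I would dispose of the routine closures. Closure under isomorphism is automatic, since $\xi$ is a function on the isomorphism classes $\I(\C)$. Axiom (2) gives $0 \in \ker \xi$. For shift-stability, axiom (4) gives $\xi(\Sigma X) = \xi(X)$ directly; replacing $X$ by $\Sigma^{-1} X$ yields $\xi(\Sigma^{-1} X) = \xi(X)$, so $\ker \xi$ is closed under both $\Sigma$ and $\Sigma^{-1}$. For closure under direct summands, suppose $X \oplus Y \in \ker \xi$; then axiom (3) gives $\xi(X) + \xi(Y) = \xi(X \oplus Y) = 0$, and since both summands are non-negative by axiom (1), each must vanish, so $X, Y \in \ker \xi$.

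The only step with any content is the two-out-of-three property: given a triangle $X \rt Y \rt Z \rt \Sigma X$ two of whose objects lie in $\ker \xi$, I must show the third does as well. The subtlety is that axiom (5) bounds $\xi$ only of the cone $Z$, so the idea is to rotate the triangle so that the object in question occupies the cone position. If $X, Y \in \ker \xi$, axiom (5) gives $\xi(Z) \leq \xi(X) + \xi(Y) = 0$, and axiom (1) forces $\xi(Z) = 0$. If $Y, Z \in \ker \xi$, I rotate to the triangle $Y \rt Z \rt \Sigma X \rt \Sigma Y$ and apply axiom (5) to get $\xi(\Sigma X) \leq \xi(Y) + \xi(Z) = 0$, whence $\xi(X) = \xi(\Sigma X) = 0$ by axiom (4). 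If $X, Z \in \ker \xi$, I rotate backwards to $\Sigma^{-1} Z \rt X \rt Y \rt Z$ and apply axiom (5) to obtain $\xi(Y) \leq \xi(\Sigma^{-1} Z) + \xi(X) = \xi(Z) + \xi(X) = 0$, using the shift-invariance just established.

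The main obstacle, such as it is, is therefore purely bookkeeping: selecting the correct rotated triangle in each of the three cases so that axiom (5) applies to the desired object, and then invoking axioms (1) and (4) to upgrade the resulting subadditivity inequalities into the equalities $\xi = 0$ needed for membership in $\ker \xi$. The essential point is that the non-negativity of axiom (1) is exactly what converts the bound of axiom (5) into vanishing.
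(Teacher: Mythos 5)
Your proof is correct and complete; the paper itself offers no argument for this lemma (it is labelled ``essentially an observation'' and referred to \cite[2.3]{P4}), and your direct verification --- non-negativity plus additivity for summand-closure, and rotation of the triangle so that the object in question sits in the cone position before applying axiom (5) --- is exactly the intended routine argument.
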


\s Let $(A,\m)$ be a hypersurface ring and let $I$ be an $\m$-primary ideal in $A$. Let $M$ be a MCM $A$-module.
Set for $n \geq 0$
\begin{align*}
  t_I(M,n) &=  \ell(\Tor^A_1(M, A/I^{n+1})) \\
  s_I(M,n) &= \ell(\Ext^1_A(M, A/I^{n+1}))) \\
  e_I(M, n) &= \ell(\Ext^{d+1}_A( A/I^{n+1}, M))).
\end{align*}
Let $\Omega^i_A(M)$ denote the $i^{th}$-syzygy of $M$.
We prove
\begin{lemma}\label{basic}
(with hypotheses as above)
\begin{enumerate}[\rm (1)]
  \item For all $n \geq 0$  the functions $t_I(-,n), s_I(-,n)$ and $e_I(-,n)$ are functions on $\CMS(A)$
  \item For all $n \geq 0$ we have $t_I(M,n) = t_I(\Omega^1_A(M),n), s_I(M,n) = s_I(\Omega^1_A(M),n)$ and $e_I(M,n) = e_I(\Omega^1_A(M),n)$.
\end{enumerate}
\end{lemma}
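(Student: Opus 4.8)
The plan is to handle the three functions in parallel, since each is a derived functor that annihilates free modules, and then to feed the syzygy exact sequence into the hypersurface ($2$-periodicity) structure.

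For part (1), I would first note that a function on $\CMS(A)$ is one that descends to stable isomorphism classes, and that for an \emph{additive} function this is equivalent to vanishing on free modules (if $M\oplus F\cong N\oplus G$ with $F,G$ free then additivity plus vanishing forces $\xi(M)=\xi(N)$). Additivity is immediate for all three, as $\Tor$ and $\Ext$ commute with finite direct sums in each variable. Vanishing on a free module $F$ is also immediate: $\Tor^A_1(F,-)=0$ and $\Ext^1_A(F,-)=0$ because $F$ is flat and projective, while $\Ext^{d+1}_A(-,F)=0$ because $A$ is Gorenstein of dimension $d$, so $\operatorname{injdim}_A A=d$ and hence $\Ext^{>d}_A(-,A)=0$. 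This settles (1) in a few lines.

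For part (2) the common device is the syzygy sequence $0\to\Omega^1_A(M)\to F\to M\to 0$ with $F$ free (note $\Omega^1_A(M)$ is again MCM over the \CM \ ring $A$). Applying $-\otimes_A A/I^{n+1}$, $\Hom_A(-,A/I^{n+1})$, and $\Hom_A(A/I^{n+1},-)$ respectively, and using the vanishing from (1) to kill the free term, the long exact sequences collapse to isomorphisms $\Tor^A_1(\Omega^1_A M,N)\cong\Tor^A_2(M,N)$, $\Ext^1_A(\Omega^1_A M,N)\cong\Ext^2_A(M,N)$, and $\Ext^{d+1}_A(N,M)\cong\Ext^{d+2}_A(N,\Omega^1_A M)$, where $N=A/I^{n+1}$. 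Thus each assertion $\xi(M)=\xi(\Omega^1_A M)$ reduces to an ``off by one'' equality of lengths between two consecutive derived-functor modules of a fixed MCM module paired with the finite-length module $N$.

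These off-by-one equalities are where the hypersurface hypothesis enters, via the $2$-periodic (matrix factorization) minimal resolution of an MCM module. For $t_I$ and $s_I$ the argument is elementary: writing this resolution as $\cdots\to A^m\xrightarrow{\psi}A^m\xrightarrow{\phi}A^m\to M\to 0$ with $\phi\psi=\psi\phi=0$, tensoring with (resp.\ mapping into) the finite-length module $N$ gives a $2$-periodic complex of \emph{finite-length} modules $N^m$, and a rank--nullity count on the two endomorphisms $\bar\phi,\bar\psi$ of $N^m$ yields $\ell(\Tor^A_1(M,N))=\ell(\Tor^A_2(M,N))$ and $\ell(\Ext^1_A(M,N))=\ell(\Ext^2_A(M,N))$ at once. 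With the isomorphisms above this gives $t_I(M,n)=t_I(\Omega^1_A M,n)$ and $s_I(M,n)=s_I(\Omega^1_A M,n)$. The main obstacle is the $e_I$ case, $\ell(\Ext^{d+1}_A(N,M))=\ell(\Ext^{d+2}_A(N,M))$: now $M$ occupies the second slot, so the analogous periodic complex has terms $M^m$ of \emph{infinite} length and the naive count fails. My plan is to complete (lengths are unchanged) and pass to $\wh A=Q/(f)$ with $Q$ regular of dimension $d+1$: in the $2$-periodic range $j\ge d+1$ the quantity $\ell(\Ext^{d+1}_A(N,M))-\ell(\Ext^{d+2}_A(N,M))$ is expressed, via the standard change-of-rings long exact sequence (Eisenbud/Gulliksen operators) for the hypersurface $A=Q/(f)$, as an Euler characteristic over $Q$. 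Since $N$ has finite length it is perfect of grade $d+1$ over the regular ring $Q$, and this reduces (up to sign) to the Serre intersection multiplicity $\chi^Q(N^{\dagger},M)$ of a finite-length module $N^{\dagger}=\Ext^{d+1}_Q(N,Q)$ against $M$; as $\dim_Q N^{\dagger}+\dim_Q M=0+d<d+1=\dim Q$, Serre's vanishing theorem forces $\chi^Q(N^{\dagger},M)=0$, valid for \emph{every} MCM $M$. I expect the delicate points to be pinning down the periodicity range, setting up the change-of-rings Euler characteristic with correct signs, and reducing it to a properly dimensioned intersection multiplicity so that Serre vanishing applies.
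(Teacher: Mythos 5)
Your part (1) and the $t_I$, $s_I$ cases of part (2) are correct and in substance identical to the paper's argument: everything reduces to the fact that a non-free MCM module over a hypersurface has a minimal free resolution that is $2$-periodic \emph{with constant rank} (the matrix factorization), after which a length count on finite-length modules gives the equality. The paper phrases this count via the four-term exact sequence $0 \rt \Tor^A_1(M, A/I^{n+1}) \rt N/I^{n+1}N \rt F/I^{n+1}F \rt M/I^{n+1}M \rt 0$ and the symmetry between the two presentations $0 \rt N \rt A^r \rt M \rt 0$ and $0 \rt M \rt A^r \rt N \rt 0$; your rank--nullity count on the reduced periodic complex is the same computation.

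The divergence is at $e_I$, and there your premise is off: the ``naive'' count does \emph{not} fail, you are just applying it to the wrong complex. Apply $\Hom_A(A/I^{n+1}, -)$ to the two syzygy sequences above. Every module $\Ext^j_A(A/I^{n+1}, L)$ is killed by $I^{n+1}$, hence has finite length; moreover $\Ext^j_A(A/I^{n+1}, L) = 0$ for $j < d$ when $L$ is MCM (grade), and $\Ext^j_A(A/I^{n+1}, A^r) = 0$ for $j > d$ (injective dimension $d$). So the long exact sequence collapses to $0 \rt \Ext^d(A/I^{n+1}, N) \rt \Ext^d(A/I^{n+1}, A^r) \rt \Ext^d(A/I^{n+1}, M) \rt \Ext^{d+1}(A/I^{n+1}, N) \rt 0$, giving $e_I(N,n)$ as an expression symmetric in $M$ and $N$ once the equality of the two ranks $r$ is used; the second sequence gives the same expression for $e_I(M,n)$. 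This is the paper's (implicit) ``similar'' proof and takes three lines. Your proposed detour through the Gulliksen/Eisenbud change-of-rings sequence and Serre's vanishing theorem is, I believe, salvageable --- it amounts to the vanishing of the Herbrand/theta pairing of a finite-length module against an MCM module --- but as written it is only a plan with acknowledged unverified sign and truncation issues, and it imports Serre's vanishing theorem, which for an arbitrary hypersurface ($Q$ possibly a ramified mixed-characteristic regular local ring) requires the Gillet--Soul\'e/Roberts theorem. That is an enormously heavier input than this lemma warrants, and you should not leave the reduction to an intersection multiplicity as a sketch when the elementary symmetric count is available.
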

\begin{proof}
(1) Let $E = M \oplus F = N \oplus G$ where $F, G$ are free $A$-modules. Then by definition
$t_I(E, n) = t_I(M,n) = t_I(N,n)$. Thus $t_I(-, n)$ is a function on $\CMS(A)$.

 The proof for assertions on $s_I(-, n)$ and $e_I(-, n)$ are similar.

(2) We may assume that $M$ has no free summands. Set $N = \Omega^A_1(M)$.  Let $ 0 \rt N  \rt F \rt M \rt 0$ be the minimal presentation of $M$ with $F = A^r$. Then note as $A$ is a hypersurface ring and $M$ is MCM without free summands we get that a minimal presentation of $N$ is as follows $0 \rt M \rt G \rt N \rt 0$ where $G = A^r$.
By using the first exact sequence we get
\[
0 \rt \Tor^A_1(M, A/I^{n+1}) \rt N/I^{n+1}N \rt F/I^{n+1}F \rt M/I^{n+1} \rt 0.
\]
So we have
\[
t_I(M,n) = \ell(N/I^{n+1}N) + \ell(M/I^{n+1}M) - r\ell(A/I^{n+1}A).
\]
Using the second exact sequence we find that $t_I(M,n) = t_I(N,n)$. The result follows.

The proof for assertions on $s_I(-, n)$ and $e_I(-, n)$ are similar.
\end{proof}
\s\label{ci-lem} The only fact we used in the above Lemma was that $\Omega^2_A(M) \cong M$ if $M$ is a MCM $A$-module with no free summands and that $A$ is Gorenstein. Thus the assertions of the Lemma hold if
$A$ is a local complete intersection and $M$ is a 2-periodic MCM $A$-module.
\section{$\CMS_0(A)$}
In this section we give proofs of Theorem \ref{main}, \ref{main-e1} and \ref{main-e2} when $M$ is free on $\Spec^0(A)$.

\begin{theorem}\label{main-0}
Let $(A,\m)$ be a hypersurface local ring of dimension $d \geq 1$ and let $I$ be an $\m$-primary ideal. Then there is an integer $r_I \geq -1$ (depending only on $I$) such that if $M \in \CMS_0(A)$ is non-zero then $\deg t_I^M(z) = r_I$.
\end{theorem}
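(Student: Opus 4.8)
The plan is to fix $n$, regard $t_I(-,n)$ as a function on the triangulated category $\CMS(A)$, verify that it is a \emph{weak triangle function} in the sense of \ref{t-f}, and then exploit Takahashi's theorem that $\CMS_0(A)$ has no proper nonzero thick subcategories \cite[6.6]{T}. Granting the weak triangle function property, the degree of $t_I^M$ will be forced to be constant on the nonzero objects of $\CMS_0(A)$ by an all-or-nothing argument. Throughout one may assume $A$ is not regular, for otherwise $\CMS_0(A)=0$ and the statement is vacuous (with $r_I$ any integer $\ge -1$).

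The main step is to show that for each fixed $n\ge 0$ the assignment $M\mapsto t_I(M,n)$ satisfies (1)--(5) of \ref{t-f} on $\CMS(A)$. Conditions (1)--(4) are immediate: lengths are non-negative, $t_I(0,n)=0$, $\Tor_1$ commutes with finite direct sums, and shift invariance $t_I(\Sigma M,n)=t_I(M,n)$ is exactly Lemma \ref{basic}(2) together with $\Sigma=\Omega^{-1}\cong\Omega$ on $\CMS(A)$. The substance is the subadditivity (5). Here the naive approach fails: a triangle $X\to Y\to Z\to\Sigma X$ is represented by a short exact sequence $0\to X\to Y\to Z\to 0$ of MCM modules, and the long exact sequence obtained by applying $-\otimes_A A/I^{n+1}$ passes $\Tor_1(Z,-)$ through a connecting map into $\Tor_0(X,-)=X/I^{n+1}X$, whose length grows like $n^{d}$ and is far too large to give $t_I(Z,n)\le t_I(X,n)+t_I(Y,n)$. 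The remedy is that over a hypersurface every MCM module has a $2$-periodic resolution, so stable Tor $\wh{\Tor}_\bullet(-,A/I^{n+1})$ is a homological functor on $\CMS(A)$ with $\wh{\Tor}_i\cong\Tor_i$ for $i\ge 1$. By $2$-periodicity its long exact sequence collapses to an exact hexagon
\[
\wh{\Tor}_1(X)\to\wh{\Tor}_1(Y)\to\wh{\Tor}_1(Z)\to\wh{\Tor}_0(X)\to\wh{\Tor}_0(Y)\to\wh{\Tor}_0(Z)\to\wh{\Tor}_1(X),
\]
all with coefficients in $A/I^{n+1}$. Crucially $\wh{\Tor}_0(M)\cong\wh{\Tor}_2(M)\cong\Tor_2(M,A/I^{n+1})$, and Lemma \ref{basic}(2) gives $\ell\,\Tor_2(M,A/I^{n+1})=t_I(M,n)$, so all six vertices have the ``right'' lengths. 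Reading off the exact piece $\wh{\Tor}_1(Y)\to\wh{\Tor}_1(Z)\to\wh{\Tor}_0(X)$ yields $t_I(Z,n)\le t_I(Y,n)+t_I(X,n)$, which is (5). (Every module here is $\Tor$ of a finite length module, hence of finite length.) I expect this verification to be the main obstacle, since it is where the hypersurface hypothesis and Lemma \ref{basic} genuinely enter; note that finiteness of lengths holds on all of $\CMS(A)$, so the restriction to $\CMS_0(A)$ is needed purely to apply \cite[6.6]{T}.

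Once $t_I(-,n)$ is a weak triangle function for every $n$, the argument of Lemma \ref{ker-lemma} applied to the subadditivity gives, for any triangle, $\deg t_I^Z\le\max(\deg t_I^X,\deg t_I^Y)$, while additivity gives $\deg t_I^{M}\le\deg t_I^{M\oplus M'}$. Hence for each integer $a$ the full subcategory
\[
\mathcal D_a=\{\,M\in\CMS_0(A):\deg t_I^M\le a\,\}
\]
is closed under shifts, direct summands and the formation of the third vertex of a triangle; that is, $\mathcal D_a$ is a thick subcategory of $\CMS_0(A)$. Now fix any nonzero $M_0\in\CMS_0(A)$ and set $a=\deg t_I^{M_0}$. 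Since $M_0\in\mathcal D_a$, the thick subcategory $\mathcal D_a$ is nonzero, so by \cite[6.6]{T} it is all of $\CMS_0(A)$; thus $\deg t_I^M\le a$ for every $M$. On the other hand $M_0\notin\mathcal D_{a-1}$, so $\mathcal D_{a-1}$ is a proper thick subcategory and hence, again by \cite[6.6]{T}, is zero; thus every nonzero $M$ satisfies $\deg t_I^M\ge a$.

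Combining the two inequalities, every nonzero $M\in\CMS_0(A)$ has $\deg t_I^M=a$, and we set $r_I:=a$. This common value is independent of the chosen $M_0$ (being the degree of every nonzero object), so it depends only on $I$; and $r_I\ge -1$ because each $t_I(M,-)$ is a non-negative function of polynomial type, whose degree is $\ge 0$ when the associated polynomial is nonzero and $-1$ otherwise. This proves the theorem.
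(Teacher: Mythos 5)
Your proof is correct and follows the same overall strategy as the paper: establish subadditivity of $t_I(-,n)$ over triangles of $\CMS_0(A)$, use it to cut out thick subcategories detecting the degree, and invoke Takahashi's classification \cite[6.6]{T}. Two implementation details differ. First, the paper packages the conclusion through a single weak triangle function $\xi_I(M)=\lim_{n\rt\infty} r!\,t_I(M,n)/n^r$ (with $r$ the maximal occurring degree), whose kernel is a proper thick subcategory and hence zero, whereas you use the filtration by the thick subcategories $\mathcal D_a$; these are interchangeable repackagings. Second, and more substantively, your verification of condition (5) of \ref{t-f} goes through stable (Tate) Tor and the $2$-periodic hexagon. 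This is correct, but heavier than necessary: by Buchweitz's description of triangles in $\CMS(A)$, a triangle $L\rt M\rt N\rt\Om^{-1}(L)$ is represented by a short exact sequence $0\rt M\rt N\oplus F\rt\Om^{-1}(L)\rt 0$ with $F$ free, so $\Tor^A_1(N,A/I^{n+1})$ is the \emph{middle} of the three $\Tor_1$ terms in the ordinary long exact sequence and is therefore squeezed between $\Tor^A_1(M,A/I^{n+1})$ and $\Tor^A_1(\Om^{-1}(L),A/I^{n+1})$; combined with $t_I(\Om^{-1}(L),n)=t_I(L,n)$ from Lemma \ref{basic}(2) this yields (5) with no stable homology at all. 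The difficulty you flag (a connecting map landing in $X/I^{n+1}X$) only arises if one insists on representing the triangle by a sequence having the cone as the quotient; choosing the representative above avoids it. Either way the argument is sound, and your degenerate-case remarks (regular $A$, the value $r_I=-1$) are handled correctly.
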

\begin{proof}
We first note that for any MCM $M$ we have $\deg t_I^M(z) \leq d - 1$, see \cite[Corollary 4]{Theo}. Recall the degree of the zero polynomial to be $-1$.
Set
$$ r = \max \{ \deg t_I^M(z) \mid  M \in \CMS_0(A) \}. $$
If $r = -1$ then we have nothing to prove. So assume $r \geq 0$. For $M \in \CMS_0(A)$ define
\[
\xi_I(M)  = \lim_{n \rt \infty}\frac{r!}{n^r} t_I(M,n).
\]
We note that $\xi_I(M) \geq 0$ and is zero precisely when $\deg t_I(M, z) < r$. \\
Claim:  $\xi_I(-)$ is a weak triangle function on $\CMS_0(A)$, see \ref{t-f}.\\
Assume the claim for the time being. Then $\ker \xi$ is a thick subcategory of $\CMS_0(A)$.
Also if $\deg t_i^L(z) = r$ then $L \notin \ker \xi$. So $\ker \xi \neq \CMS_0(A)$. As $\CMS_0(A)$ has no proper thick subcategories, see \cite[6.6]{T}, it follows that $\ker \xi = 0$.
Therefore $\deg t_I^M(z) = r$ for all $M \neq 0$ in $\CMS_0(A)$.

It remains to show $\xi_I $ is a weak triangle function on $\CMS_0(A)$. The first three conditions are trivial to satisfy.
By \ref{basic}(2) it follows that $\xi_I(\Om^{-1}_A(M)) = \xi_I(M)$.
Let $ L \rt M \rt N \rt \Om^{-1}(L)$ is a triangle in $\CMS_0(A)$ then note that we have a short exact sequence of $A$-modules
$$ 0 \rt M \rt N \oplus F \rt \Om^{-1}(L) \rt 0, \quad \text{where $F$ is free.}$$
Therefore we have an inequality
$$ t_I(N,n) \leq t_I(M, n) + t_I( \Om^{-1}(L), n).$$
The result follows.
\end{proof}
The following two results can be proved similarly as in \ref{main-0}. We have to use that $\deg s_I^M(z) \leq d -1$ (see \cite[Corollary 4]{Theo}) and that $\deg e_I^M(z) \leq d$ (see \cite[Corollary 7]{Theo}).
\begin{theorem}\label{main-e1-0}
Let $(A,\m)$ be a hypersurface local ring of dimension $d \geq 1$ and let $I$ be an $\m$-primary ideal. Then there is an integer $s_I \geq -1$ (depending only on $I$) such that if $M \in \CMS_0(A)$ is non-zero then $\deg s_I^M(z) = s_I$.
\end{theorem}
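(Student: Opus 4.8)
The plan is to mimic the proof of Theorem \ref{main-0} almost verbatim, replacing the Tor-functor by the Ext-functor and using the bound $\deg s_I^M(z) \leq d-1$ in place of the analogous bound for $t_I^M$. First I would set
$$ s = \max \{ \deg s_I^M(z) \mid M \in \CMS_0(A) \}, $$
which is a well-defined integer since every such degree lies between $-1$ and $d-1$. If $s = -1$ there is nothing to prove, so assume $s \geq 0$ and for $M \in \CMS_0(A)$ define the leading-coefficient functional
$$ \xi_I(M) = \lim_{n \rt \infty} \frac{s!}{n^s}\, s_I(M,n). $$
As before $\xi_I(M) \geq 0$, with $\xi_I(M) = 0$ exactly when $\deg s_I^M(z) < s$. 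The goal is to show that $\xi_I$ is a weak triangle function on $\CMS_0(A)$; then Lemma \ref{ker-lemma} makes $\ker \xi_I$ a thick subcategory, the existence of some $L$ with $\deg s_I^L(z) = s$ forces $\ker \xi_I \neq \CMS_0(A)$, and Takahashi's classification \cite[6.6]{T} gives $\ker \xi_I = 0$, so that $\deg s_I^M(z) = s =: s_I$ for every nonzero $M$.

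Verifying the weak triangle axioms is where the only genuine difference from Theorem \ref{main-0} appears. Conditions (1)--(3) are immediate from non-negativity and additivity of length over direct sums. For condition (4) I would invoke Lemma \ref{basic}(2), which records $s_I(M,n) = s_I(\Om^1_A(M),n)$; since the shift on $\CMS(A)$ is $\Om^{-1}$, this yields $\xi_I(\Om^{-1}_A(M)) = \xi_I(M)$. For the triangle inequality (5), take a triangle $L \rt M \rt N \rt \Om^{-1}(L)$ in $\CMS_0(A)$; exactly as in \ref{main-0} it lifts to a short exact sequence of $A$-modules
$$ 0 \rt M \rt N \oplus F \rt \Om^{-1}(L) \rt 0, \quad \text{where $F$ is free.} $$

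The one nonroutine point is keeping track of the direction of the long exact sequence. Because $\Ext^1_A(-, A/I^{n+1})$ is contravariant, applying it to the above short exact sequence produces the three-term piece
$$ \Ext^1_A(\Om^{-1}(L), A/I^{n+1}) \rt \Ext^1_A(N \oplus F, A/I^{n+1}) \rt \Ext^1_A(M, A/I^{n+1}), $$
in which the outer terms occur in the opposite order from the Tor computation of Theorem \ref{main-0}. Since $F$ is free we have $\Ext^1_A(N \oplus F, -) = \Ext^1_A(N, -)$, and exactness of this piece gives
$$ \ell\bigl(\Ext^1_A(N, A/I^{n+1})\bigr) \leq \ell\bigl(\Ext^1_A(\Om^{-1}(L), A/I^{n+1})\bigr) + \ell\bigl(\Ext^1_A(M, A/I^{n+1})\bigr), $$
that is, $s_I(N,n) \leq s_I(M,n) + s_I(\Om^{-1}(L),n)$. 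Dividing by $n^s/s!$ and letting $n \rt \infty$ yields $\xi_I(N) \leq \xi_I(M) + \xi_I(\Om^{-1}(L))$, which is precisely condition (5). I expect this contravariance bookkeeping to be the main (and essentially only) obstacle; once the inequality direction is seen to survive, every remaining step is identical to the proof of Theorem \ref{main-0}.
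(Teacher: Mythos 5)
Your proposal is correct and follows exactly the route the paper intends: the paper simply states that Theorem \ref{main-e1-0} "can be proved similarly as in \ref{main-0}" using the bound $\deg s_I^M(z) \leq d-1$, and your adaptation (including the careful check that the contravariance of $\Ext^1_A(-,A/I^{n+1})$ still yields the middle-term length inequality needed for axiom (5)) is precisely that argument carried out in detail.
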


\begin{theorem}\label{main-e2-0} (= Theorem \ref{main-e2})
Let $(A,\m)$ be a hypersurface local ring of dimension $d \geq 1$ and let $I$ be an $\m$-primary ideal. Then there is an  integer $e_I \geq -1$ (depending only on $I$) such that if $M \in \CMS_0(A)$ is non-zero then $\deg e_I^M(z) = e_I$.
\end{theorem}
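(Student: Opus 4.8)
The plan is to run the argument of Theorem \ref{main-0} essentially verbatim, with the Tor-function $t_I$ replaced by the Ext-function $e_I$. First I would record the uniform bound $\deg e_I^M(z) \leq d$, valid for every MCM module $M$ by \cite[Corollary 7]{Theo}, and set
\[
e = \max\{ \deg e_I^M(z) \mid M \in \CMS_0(A) \},
\]
which is a well-defined integer in $\{-1,0,\dots,d\}$ (the set of values is a finite set of integers bounded above by $d$). If $e = -1$ there is nothing to prove, so assume $e \geq 0$ and, for $M \in \CMS_0(A)$, define
\[
\xi_I(M) = \lim_{n \rt \infty} \frac{e!}{n^e}\, e_I(M,n).
\]
Since $e_I(M,-)$ is an integer-valued function of polynomial type of degree at most $e$, this limit equals $e!$ times the leading coefficient of $e_I^M(z)$; hence $\xi_I(M)$ is a non-negative integer, and it vanishes exactly when $\deg e_I^M(z) < e$.

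The core of the proof is the claim that $\xi_I$ is a weak triangle function on $\CMS_0(A)$ in the sense of \ref{t-f}. Conditions (1)--(3) follow from the additivity of length and of $\Ext$ over direct sums, together with the fact (Lemma \ref{basic}(1)) that $e_I(-,n)$ is a well-defined function on $\CMS(A)$; condition (4) is immediate from Lemma \ref{basic}(2), which yields $\xi_I(\Om^{-1}_A(M)) = \xi_I(M)$. Granting the claim, Lemma \ref{ker-lemma} shows that $\ker \xi_I$ is a thick subcategory of $\CMS_0(A)$. By the choice of $e$ there is some $L$ with $\deg e_I^L(z) = e$, and such $L$ does not lie in $\ker \xi_I$; thus $\ker \xi_I \neq \CMS_0(A)$. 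Since $\CMS_0(A)$ has no proper thick subcategories by \cite[6.6]{T}, we get $\ker \xi_I = 0$, i.e.\ $\deg e_I^M(z) = e$ for every non-zero $M \in \CMS_0(A)$, which is the assertion of the theorem with $e_I = e$.

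The step I expect to demand the most care is verifying condition (5), because in $e_I(M,n) = \ell(\Ext^{d+1}_A(A/I^{n+1}, M))$ the variable module $M$ occupies the \emph{second} argument, so I must keep track of the direction of the induced long exact sequence. Given a triangle $L \rt M \rt N \rt \Om^{-1}(L)$ in $\CMS_0(A)$, the associated short exact sequence of $A$-modules is $0 \rt M \rt N \oplus F \rt \Om^{-1}(L) \rt 0$ with $F$ free. Applying $\Ext^\bullet_A(A/I^{n+1}, -)$ produces a long exact sequence in which $\Ext^{d+1}_A(A/I^{n+1}, N \oplus F)$ is flanked by the corresponding groups for $M$ and for $\Om^{-1}(L)$. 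Here I would use that $A$ is Gorenstein of dimension $d$, so a finite length module has grade $d$ while $A$ has injective dimension $d$; hence $\Ext^{d+1}_A(A/I^{n+1}, A) = 0$ and the free summand $F$ contributes nothing. This gives, for all $n \geq 0$,
\[
e_I(N,n) \leq e_I(M,n) + e_I(\Om^{-1}(L),n).
\]
Multiplying by $e!/n^e$, passing to the limit, and using $\xi_I(\Om^{-1}(L)) = \xi_I(L)$ yields $\xi_I(N) \leq \xi_I(M) + \xi_I(L)$, which is precisely condition (5) for the given triangle, and with it the claim.
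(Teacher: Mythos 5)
Your proposal is correct and matches the paper's intent exactly: the paper gives no separate argument, stating only that Theorem \ref{main-e2-0} ``can be proved similarly as in \ref{main-0}'' using the bound $\deg e_I^M(z) \leq d$ from \cite[Corollary 7]{Theo}, which is precisely the substitution you carry out. Your extra care in checking condition (5) --- noting that the variable module sits in the second argument of $\Ext$ and that the free summand contributes nothing because $\operatorname{injdim} A = d$ kills $\Ext^{d+1}_A(A/I^{n+1},A)$ --- is a detail the paper leaves implicit, and it is handled correctly.
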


\section{Proofs of Theorem \ref{main} and \ref{main-e1}}
In this section we give proofs of Theorem \ref{main} and \ref{main-e1}. We need a few preliminaries.
\s  Let $M$ be any finitely generated $A$-module.  Set \\ $L_i(M) = \bigoplus_{n \geq 0}\Tor^A_i(M,  A/I^{n+1})$ for $i \geq 0$. Let $\R = A[It]$ be the Rees algebra of $I$. We have an exact sequence of $\R$-modules
$$ 0 \rt \R(1) \rt A[t](1) \rt L_0(A) \rt 0.$$
Tensoring with $M$ yields an inclusion $0 \rt L_1(M) \subseteq \R(1)\otimes M$ and isomorphisms  $L_i(M) \cong \Tor^A_{i-1}(\R(1), M)$ for $i \geq 2$. It follows that $L_i(M)$ are finitely generated $\R$-module for all $i \geq 1$. We note that if $\Om^A_2(M) \cong M$ then we have $L_i(M) \cong L_{i+2}(M)$ for all $i \geq 1$.

\s We also need the following notion. Let $M \in \CMS(A)$. Let
\[
\Supp(M) = \{ P \mid  M_P \ \text{is not free} \ A_P-\text{module} \}.
\]
It is readily verified that $\Supp(M) = V(\sHom(M, M))$.
\begin{proof}[Proof of Theorem \ref{main}]
By Theorem \ref{main-0} we have that there exists $r_I$ such that for any  non-free MCM module $E \in \CMS_0(A)$ we have $\deg t_I^E(z) = r_I$.

Claim: For any non-free MCM $A$-module $M$ we have $\deg t_I^M(z) = r_I$.\\
We prove this assertion by induction on $\dim \Supp(M)$. If $\dim \Supp(M) = 0$ then $M$ is free on $\Spec^0(A)$. In this case we have nothing to show.

Now assume $\dim \Supp(M) > 0$.
As $L_1(M)_n, L_2(M)_n$ have finite length for all $n$ and as $L_1(M), L_2(M)$ are finitely generated $\R$-modules it follows that there exists $l$ such that $\m^l L_i(M)_n = 0$ for all $n$ and for $i = 1, 2$. As $M$ has period two it follows that $\m^l L_i(M)_n = 0$ for all $i \geq 1$ and all $n \geq 0$.

Let
$$x \in \m^l \setminus \bigcup_{ \stackrel{P \supseteq \ann \sHom(M, M)}{P \ \text{minimal}}} P.$$
Let $M \xrightarrow{x} M \rt N \rt \Om^{-1}(A)$ be a triangle in $\CMS(A)$. It is readily verified that support of  $\sHom(N, N)$ is contained in the intersection of support of $\sHom(M,M)$ and $A/(x)$. So $\dim \Supp(N) \leq \dim \Supp(M) -1$. It is also not difficult to prove that $N$ is not free $A$-module. By induction hypotheses  $\deg t_I^N(z) = r_I$. By the structure of triangles in $\CMS(A)$, see \cite[4.4.1]{Buchw}, we have an exact sequence
$0 \rt G \rt N \rt M/xM \rt 0$ with $G$-free. It follows that $L_3(N) = L_3(M/xM)$. We also have an exact sequence
$0 \rt M \xrightarrow{x} M \rt M/xM \rt 0$. As $x \in \ann L_i(M)$ it follows that we have an exact sequence
$$ 0 \rt L_3(M) \rt L_3(M/xM) \rt L_2(M) \rt 0.$$
As the Hilbert function of $L_3(M)$ and $L_2(M)$ are identical, \ref{basic}(2) we get that
$2t_I^M(z) = t_I^N(z)$. It follows that $\deg t_I^M(z) = r_I$. By induction the result follows.
\end{proof}

\s To prove Theorem \ref{main-e1} we need a few preliminaries. Let $M$ be a finitely generated \CM \  $A$-module of dimension $r$. Let \\ $E^i(M) = \bigoplus_{n \geq 0}\Ext^i_A(M, A/I^{n+1})$.
The exact sequence of $\R$-modules
$$ 0 \rt \R(1) \rt A[t](1) \rt L_0(A) \rt 0,$$
induces an isomorphism $E^i(M) \cong \Ext^{i+1}_A(M, \R(1))$ for all $i > d -r$. In particular $E^i(M)$ are finitely generated $\R$-modules for all $i > d -r$.
We note that if $\Om^A_2(M) \cong M$ then we have $E^i(M) \cong E^{i+2}(M)$ for all $i \geq 1$. The proof of Theorem \ref{main-e1} is mostly similar to the proof of Theorem \ref{main}. So we mostly sketch the proof.
\begin{proof}[Sketch of a proof of Theorem \ref{main-e1}]
By Theorem \ref{main-e1-0} we have that there exists $s_I$ such that for any  non-free MCM module $L \in \CMS_0(A)$ we have $\deg s_I^L(z) = s_I$.

Claim: For any non-free MCM $A$-module $M$ we have $\deg s_I^M(z) = s_I$.\\
We prove this assertion by induction on $\dim \Supp(M)$. If $\dim \Supp(M) = 0$ then $M$ is free on $\Spec^0(A)$. In this case we have nothing to show.

Now assume $\dim \Supp(M) > 0$.
As $E^1(M)_n, E^2(M)_n$ have finite length for all $n$ and as $E^1(M), E^2(M)$ are finitely generated $\R$-modules it follows that there exists $l$ such that $\m^l E^i(M)_n = 0$ for all $n$ and for $i = 1, 2$. As $M$ has period two it follows that $\m^l E^i(M)_n = 0$ for all $i \geq 1$ and all $n \geq 0$.
Let
$$x \in \m^l \setminus \bigcup_{ \stackrel{P \supseteq \ann \sHom(M, M)}{P \ \text{minimal}}} P.$$
Let $M \xrightarrow{x} M \rt N \rt \Om^{-1}(A)$ be a triangle in $\CMS(A)$. As before we have $\dim \Supp(N) \leq \dim \Supp(M) -1$ and $N$ is not free. By induction hypotheses  $\deg s_I^N(z) = s_I$. By the structure of triangles in $\CMS(A)$, see \cite[4.4.1]{Buchw}, we have an exact sequence
$0 \rt G \rt N \rt M/xM \rt 0$ with $G$-free. It follows that $E^3(N) = E^3(M/xM)$. We also have an exact sequence
$0 \rt M \xrightarrow{x} M \rt M/xM \rt 0$. As $x \in \ann E^i(M)$ it follows that we have an exact sequence
$$ 0 \rt E^2(M) \rt E^3(M/xM) \rt E^3(M) \rt 0.$$
As the Hilbert function of $E^3(M)$ and $E^2(M)$ are identical, \ref{basic}(2) we get that
$2s_I^M(z) = s_I^N(z)$. It follows that $\deg s_I^M(z) = s_I$. By induction the result follows.
\end{proof}

\begin{remark}\label{why}
  Consider $U^i(M) = \bigoplus_{n \geq 0}\Ext^i_{A}(A/I^{n+1}, M)$. Then for $i \geq d+1$ it is  possible to give a natural $\R$-module structure on $U^i(M)$. However with this structure $U^i(M)$ is NOT finitely generated (note if $xt \in \R_1$ then $x_1t U^i(M)_n \subseteq U^i(M)_{n-1}$). Thus it is not possible to extend the result in \ref{main-e2-0} to all MCM modules.
\end{remark}
\section{Proof of Theorem \ref{te}}
In this section we give a proof of Theorem \ref{te}. To prove the result note that we may assume the residue field $k$ of $A$ is infinite (otherwise take the extension $A[t]_{\m A[t]}$ of $A$).

\s Recall $x \in I$ is superficial with respect to $M$ if there exists $c$ such that $(I^{n+1}M \colon x) \cap I^cM = I^nM$ for all $n \gg 0$. If $\depth M > 0$ then (as $I$ is $\m$-primary) it follows that $x$ is $M$-regular and $(I^{n+1}M \colon x) = I^nM$ for $n \gg 0$. If $k$ is infinite then $I$-superficial elements with respect to $M$ exists. In fact in this case there exists a non-empty open set $U_M$ in the Zariski topology of $I/\m I$ such that if the image of $x$ is in $U_M$ then $x$ is $I$-superficial with respect to $M$.

\s Let $E = \bigoplus_{n \geq 0}E_n$ be a finitely generated graded module over the Rees algebra $\R= A[It]$. Assume $E_n$ has finite length for all $n$. There exists $xt \in \R_1$ such that $xt$ is $E$-filter regular, i.e.,  $(0 \colon_E xt)_n = 0$ for $n \gg 0$. In fact in this case there exists a non-empty open set $U_E$ in the Zariski topology of $I/\m I$ such that if the image of $x$ is in $U_E$ then $xt$ is $E$-filter regular.

\s\label{mod-TE} Assume $\dim A  \geq 2$ and assume $M$ is a MCM $A$-module. Also assume $A$ is Gorenstein.  Let $x \in I$ be such that it is $M \oplus A$-superficial and  $xt$ is $L_1(M)$, $E^i(M)$-filter regular for $i = 1, 2$. We note that $(I^{n+1} \colon x) = I^n$ and $(I^{n+1}M \colon x) = I^nM$ for $n \gg 0$. Set $B = A/(x)$ and $J = I/(x)$.
We have an exact sequence for $n \geq 1$
\[
 0 \rt \ker \alpha_n \rt A/I^n \xrightarrow{\alpha_n} A/I^{n+1} \rt B/J^{n+1} \rt 0,
\]
where $\alpha_n(a + I^n) = xa + I^{n+1}$. We note that $\ker \alpha_n  = (I^{n+1} \colon x)/I^n = 0$ for $n \gg 0$. Set $N = M/xM$ Thus for $n \gg 0$ we have an exact sequence
\begin{align*}
  \Tor^A_1(M, A/I^n) &\xrightarrow{\alpha^1_n} \Tor^A_1(M, A/I^{n+1}) \rt \Tor^B_1(N, B/J^{n+1}) \rt \\
 M/I^nM &\xrightarrow{\alpha^0_n}  M/I^{n+1}M \rt N/J^{n+1}N \rt 0.
\end{align*}
We note that $\ker \alpha^0_n = (I^{n+1}M \colon x)/I^nM = 0$ for $n \gg 0$. Furthermore the map $\alpha^1_n$ is the $n^{th}$-component of the multiplication map by $xt \in \R_1$ on $L_1(M)$. As $xt$ is $L_1(M)$-filter regular it follows that $\ker \alpha^1_n = 0$ for $n \gg 0$. Thus for $n \gg 0$ we have an exact sequence
\s\label{t}
 $$0 \rt  \Tor^A_1(M, A/I^n) \xrightarrow{\alpha^1_n} \Tor^A_1(M, A/I^{n+1}) \rt \Tor^B_1(N, B/J^{n+1}) \rt 0.$$

Similarly as $xt$ is also $E_1(M)$ and $E_2(M)$-filter regular we have an exact sequence
\s\label{e}
$$0 \rt  \Ext^1_A(M, A/I^n) \xrightarrow{\beta^1_n} \Ext_A^1(M, A/I^{n+1}) \rt \Ext_B^1(N, B/J^{n+1}) \rt 0.$$

\s We note that if $A$ is a hypersurface and $I \subseteq \m^2$ is an $\m$-primary ideal then if we go mod an $I$-superficial element $x$ then the resulting ring $B = A/(x)$ is no longer a hypersurface. It is however a complete intersection. Thus we prove the following result:
\begin{theorem}
\label{ci} Let $(A,\m)$ be a complete intersection of dimension $d \geq 1$. Let $I$ be an $\m$-primary ideal. Let $M$ be a non-free MCM $A$-module with bounded betti-numbers. Then $\deg t^M_I(z) = \deg s^M_I(z)$.
\end{theorem}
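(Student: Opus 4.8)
The plan is to induct on $d=\dim A$, using the exact sequences \ref{t} and \ref{e} to descend one dimension at a time; the only genuinely new input is a vanishing criterion that decides, at each level, whether the relevant Hilbert polynomial is zero or a nonzero constant. First I would reduce to the case $\Om^2_A(M)\cong M$ (replacing $M$ by a high syzygy, which by \ref{basic}(2) and \ref{ci-lem} alters neither $t^M_I(z)$ nor $s^M_I(z)$); I return to the legitimacy of this reduction at the end. Two-periodicity also yields that $\ell(\Tor^A_i(M,A/I^{n+1}))$ is independent of $i\geq 1$, and likewise for $\Ext^i_A$ — a fact I use repeatedly, since it upgrades vanishing in a single homological degree to vanishing in all positive degrees.

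For the inductive step assume $d\geq 2$ and pick $x\in I$ as in \ref{mod-TE}. Then $B=A/(x)$ is again a complete intersection, of dimension $d-1$, and $N=M/xM$ is a non-free MCM $B$-module; as $x$ is regular on $A$ and on $M$, the minimal $A$-resolution of $M$ reduces modulo $x$ to the minimal $B$-resolution of $N$, so $N$ is again $2$-periodic. Reading \ref{t} and \ref{e} as length identities gives, for $n\gg 0$,
\[
 t_I(M,n)-t_I(M,n-1)=\ell(\Tor^B_1(N,B/J^{n+1}))=t_J(N,n),
\]
and the analogous $s_I(M,n)-s_I(M,n-1)=s_J(N,n)$. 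Thus $t^N_J(z)$ is the first difference of $t^M_I(z)$ and $s^N_J(z)$ that of $s^M_I(z)$. Hence $\deg t^M_I=1+\deg t^N_J$ whenever $\deg t^M_I\geq 1$, whereas $\deg t^N_J=-1$ forces only $\deg t^M_I\in\{-1,0\}$, since the difference operator cannot see the constant term; the same dichotomy holds for $s$. The induction hypothesis applied to $(B,N)$ gives $\deg t^N_J=\deg s^N_J$.

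To close the induction — and to settle the base case $d=1$, where both degrees already lie in $\{-1,0\}$ — it remains to prove the biconditional $t^M_I(z)=0\iff s^M_I(z)=0$. By $2$-periodicity, $t^M_I(z)=0$ means $\Tor^A_i(M,A/I^{n+1})=0$ for all $i\geq 1$ and all $n\gg 0$, and $s^M_I(z)=0$ means $\Ext^i_A(M,A/I^{n+1})=0$ for all $i\geq 1$ and all $n\gg 0$. Here I would invoke the theory of support varieties over complete intersections (Avramov--Buchweitz): for finitely generated $X,Y$ one has $\Tor^A_i(X,Y)=0$ for $i\gg 0$ if and only if $V(X)\cap V(Y)=\{0\}$, if and only if $\Ext^i_A(X,Y)=0$ for $i\gg 0$. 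Taking $X=M$, $Y=A/I^{n+1}$ shows that \emph{both} conditions are equivalent to the single geometric condition $V(M)\cap V(A/I^{n+1})=\{0\}$ for $n\gg 0$, whence $t^M_I(z)=0\iff s^M_I(z)=0$. Granting this, the bookkeeping finishes: if $\deg t^N_J=\deg s^N_J\geq 0$ both top degrees are one larger and agree; if $\deg t^N_J=\deg s^N_J=-1$ then $\deg t^M_I,\deg s^M_I\in\{-1,0\}$ and the biconditional forces equality.

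I expect the biconditional to be the main obstacle. The difference operator in \ref{t}, \ref{e} discards precisely the information of whether the eventual constant is zero, so the entire content beyond formal bookkeeping is the equality of the \emph{thresholds} for $\Tor$- and $\Ext$-vanishing, and this is where the complete intersection hypothesis is indispensable: over a hypersurface these degrees are uniform across all non-free MCM modules (Theorems \ref{main}, \ref{main-e1}), but over a genuine complete intersection no such uniformity is available and one must instead exploit that a single support-variety condition governs both the homological and cohomological vanishing. A secondary point needing care is the reduction to the $2$-periodic case, since over an arbitrary complete intersection bounded Betti numbers need not force literal periodicity; in the intended use of Theorem \ref{ci} inside the proof of Theorem \ref{te}, however, $N$ arises by reducing a module over a hypersurface and is genuinely $2$-periodic, so this subtlety does not impede the application.
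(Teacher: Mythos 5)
Your proposal is correct and follows essentially the same route as the paper: induction on $d$ via a superficial element and the exact sequences \ref{t} and \ref{e}, with the vanishing biconditional $t^M_I(z)=0\iff s^M_I(z)=0$ supplied by the Avramov--Buchweitz equivalence of asymptotic $\Tor$- and $\Ext$-vanishing over complete intersections (the paper cites their Theorem III, which is your support-variety criterion). The only cosmetic difference is that the paper asserts $2$-periodicity of $M$ directly from bounded Betti numbers (after discarding free summands), whereas you pass to a high syzygy; both are legitimate and change neither polynomial.
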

\begin{proof}
We prove the result by induction on dimension $d$ of $A$. We may assume $M$ has no free summands. As $M$ has bounded betti numbers it follows that $M$ is $2$-periodic, i.e., $\Omega^2_A(M) \cong M$.

 We first consider the case when $d = 1$.  We note that in this case $\deg t^M_I(z) = 0 $ or $ = -1$. Similarly we have  $\deg s^M_I(z) = 0 $ or $ = -1$. We show that  $\deg t^M_I(z) = -1  $ if and only if $\deg s^M_I(z) = -1  $.
Suppose  $\deg t^M_I(z) = -1  $. So $t_I^M(z) = 0$ and therefore
 $t_I(M, n) = 0$ for $n \gg 0$ (say from $n \geq m$). As $t_I(M, n) = t_I(\Omega^1_A(M), n)$ for all $n$ and $M$ is $2$-periodic we have $\Tor^A_i(M, A/I^{n +1}) = 0$ for all $i \geq 1$ and for $n \geq m$. It follows from \cite[Theorem III]{AB} that $\Ext^i(M, A/I^{n +1}) = 0$ for all $i \gg 0$ and for all $n \geq m$. As $s_I(M, n) = s_I(\Omega^1_A(M), n)$ for all $n$ and $M$ is $2$-periodic we have
$s_I(M, n) = 0$ for all $n \geq m$. It follows that $s_I^M(z) = 0$. Thus $\deg s^M_I(z) = -1  $.  The proof of when $\deg s^M_I(z) = -1  $ then $\deg t^M_I(z) = -1  $ is similar and left to the reader.

We now assume that $d \geq 2$ and the result is proved when $\dim A = d -1$.
A proof similar to above shows that $\deg t^M_I(z) = -1  $ if and only if $\deg s^M_I(z) = -1  $. So assume  $\deg t^M_I(z) \geq 0$ (and so  $\deg s^M_I(z) \geq 0 $). We may assume that the residue field of $A$ is infinite. Let $x \in I$ be $M \oplus A$-superficial and also assume that $xt \in \R_1$ is $L_1(M) \oplus E_1(M) \oplus E_2(M)$-filter regular.
We note that $B = A/(x)$ is a complete intersection and $N = M/xM$ is a MCM $B$-module with bounded betti-numbers. Set $J = I/(x)$. So it follows from our induction hypotheses that
$\deg t^N_J(z) = \deg s^M_J(z)$. As $\deg t^M_I(z) \geq 0$, it follows from  \ref{t} that $\deg t^N_J(z) = \deg t^M_I(z) -1 $. Similarly it follows from \ref{e} that  $\deg s^N_J(z) = \deg s^M_I(z) -1 $. The result follows.
\end{proof}

We now give
\begin{proof}[Proof of Theorem \ref{te}]
Let $M$ be non-free MCM $A$-module. Then $t_I = \deg t^M_I(z)$ and $s_I = \deg s^M_I(z).$ Note $M$ is $2$-periodic. By Theorem \ref{ci} the result follows.
\end{proof}

\section{Proof of Theorem \ref{ass}}
In this section we give
\begin{proof}[Proof of Theorem \ref{ass}]
Let $\R = A[It]$.  Recall $L_0(M) = \bigoplus_{n \geq 0}M/I^{n+1}M$. By \cite[5.6]{P3} we have
$$\Ass_\R G_I(M) = \Ass_\R L_0(M).$$
As $A$ is a hypersurface ring, in particular it is a Gorenstein ring. So there exists an MCM $A$-module $N$ and an exact sequence $0 \rt M \rt F \rt N \rt 0$ with $F$ free.
Notice $L_1(N)$ is a $\R$-submodule if $L_0(M)$. In particular if $L_1(M) \neq 0$ then it is necessarily of dimension $d$ by our assumption on $M$. So $t_I = d - 1$ if $L_1(N) \neq 0$.
If $L_1(N) = 0$ then notice for all $n \geq 0$ we have $t_I(N,n) = t_I(\Omega^A_1(N), n) = 0$. So for all $n \geq 0$ we have $\Tor^A_i(N, A/I^{n+1}) = 0$ for all $i \geq 0$. As $N$ does not have finite projective dimension it follows from \cite[1.9]{HW} that $\projdim A/I^{n+1} $ is finite for all $n \geq 0$.
\end{proof}

\begin{remark}
The assumption that $G_I(M)$ is equidimensional is a mild one. It is not difficult to show that if $A$ is also universally catenary then $G_I(M)$ is equi-dimensional for any MCM $A$-module $M$.
The assumption that $G_I(M)$ is un-mixed is not mild. There are plenty of examples when $\depth G_I(M) = 0$.
\end{remark}

\end{document}